\newtheorem{thm}{Theorem}
\newtheorem{lemma}[thm]{Lemma}
\newtheorem{cor}[thm]{Corollary}
\theoremstyle{definition}
\newtheorem{remark}[thm]{Remark}
\numberwithin{equation}{section}
\newcommand{\NC}{{Q}}
\newcommand{\Q}{{\mathbb Q}}
\newcommand{\R}{{\mathbb R}}
\newcommand{\RR}{{R}}
\newcommand{\Z}{{\mathbb Z}}
\newcommand{\be}{\begin{eqnarray}}
\newcommand{\bea}{\begin{eqnarray*}}
\newcommand{\ee}{\end{eqnarray}}
\newcommand{\eea}{\end{eqnarray*}}
\newcommand{\gap}{\mathrm{Gap}}
\newcommand{\frob}{g}
\newcommand{\ffrob}{f}
\newcommand{\ve}{\boldsymbol}
\newcommand{\A}{A}
\newcommand{\modulo}{\,\mathrm{mod}\;}
\begin{document}

\mbox{}
\title[Integrality gaps]{Integrality gaps of integer knapsack problems}

\author{Iskander Aliev}
\address{Mathematics Institute, Cardiff University, UK}
\email{alievi@cardiff.ac.uk}
\author{Martin Henk}
\address{Department of Mathematics, TU Berlin, Germany}
\email{henk@math.tu-berlin.de}
\author{Timm Oertel}
\address{Mathematics Institute, Cardiff University, UK}
\email{oertelt@cardiff.ac.uk}

\date{\today}

\begin{abstract}
We obtain optimal lower and upper bounds for the (additive) integrality gaps of integer knapsack problems.
In a randomised setting, we show that the integrality gap of  a ``typical'' knapsack problem is drastically smaller
than the integrality gap that occurs in a worst case scenario.

\end{abstract}
\maketitle

\section{Introduction}\label{intro}

Given an integer $m\times n$ matrix $A$,
integer vector ${\ve b}\in \Z^m$ and a cost vector ${\ve c}\in \Q^n$, consider the linear integer programming problem
\be
\min\{ {\ve c} \cdot {\ve x}: A{\ve x}={\ve b}, {\ve x}\in \Z^n_{\ge 0}\}\,.
\label{initial_IP}
\ee
The linear programming relaxation to (\ref{initial_IP}) is obtained by dropping the integrality constraint
\be
\min\{ {\ve c} \cdot {\ve x}: A{\ve x}={\ve b}, {\ve x}\in \R^n_{\ge 0}\}\,.
\label{initial_LP}
\ee
We will denote by $IP_{\ve c}(A, {\ve b})$ and $LP_{\ve c}(A, {\ve b})$ the optimal values of (\ref{initial_IP}) and (\ref{initial_LP}), respectively.

While the problem \eqref{initial_LP} is polynomial time solvable~\cite{khachiyan1980polynomial}, it is well known that \eqref{initial_IP} is NP-hard~\cite{GareyJohnson-Book79}.
There are many examples, where relaxation on the integrality constraints are used to approximate, or even to solve, integer programming problems.
Prominent examples can be found in the areas of cutting plane algorithms, such us Gomory cuts \cite{Gomory58},
and approximation algorithms for combinatorial problems. For further details see \cite{Bertsimas2005}, \cite{ConCorZam} and \cite{Vazirani}.
Therefore, a natural question is to compare the optimal values $IP_{\ve c}$ and $LP_{\ve c}$ with each other.

Suppose that (\ref{initial_IP}) is feasible and bounded.
The {\em (additive) integrality gap } $IG_{\ve c}(A, {\ve b})$ is  a fundamental characteristic of the problem (\ref{initial_IP}), defined as
%
\bea
IG_{\ve c}(A, {\ve b})= IP_{\ve c}(A, {\ve b})-LP_{\ve c}(A, {\ve b})\,.
\eea


The problem of computing upper bounds for the additive integrality gaps
has been studied by Ho\c{s}ten and Sturmfels \cite{HS}, Eisenbrand and Shmonin \cite{ES} and, more recently, by  Eisenbrand et al \cite{EHPS}.
Specifically, given a tuple $(A, {\ve c})$ one asks for the upper bounds on $IG_{\ve c}(A, {\ve b})$ as ${\ve b}$ varies. In this setting, the optimal bound is given by the {\em integer programming gap} $\gap_{\ve c}(A)$, defined by Ho\c{s}ten and Sturmfels \cite{HS} as
\bea
\gap_{\ve c}(A)= \max_{{\ve b}} IG_{\ve c}(A, {\ve b})\,,
\eea
where ${\ve b}$ ranges over integer vectors such that (\ref{initial_IP}) is feasible and bounded.
Note that, $\gap_c(A)=0$ for all $c\in\Z^n$, if and only if $A$ is totally unimodular \cite[Theorem 19.2]{Schrijver}.
Ho\c{s}ten and Sturmfels \cite{HS} showed that for fixed $n$ the value of $\gap_{\ve c}(A)$ can be computed in polynomial time.   Eisenbrand and Shmonin \cite{ES} extended this result to integer programs in the canonical form.

Eisenbrand et al \cite{EHPS} studied a closely related problem of testing upper bounds for $IG_{\ve c}(A, {\ve b})$ in context of a generalised {\em integer rounding property}. Following \cite{EHPS}, the tuple $(A, {\ve c})$ with ${\ve c}\in \Z^n$ has the {\em additive integrality gap of at most} $\gamma$ if
\bea\label{integer_rounding}
IP_{\ve c}(A, {\ve b})\le \lceil LP_{\ve c}(A, {\ve b}) \rceil +\gamma\,
\eea
for each ${\ve b}$ for which the linear programming relaxation (\ref{initial_LP}) is feasible.

The classical case $\gamma=0$ corresponds to the integer rounding property and can be tested in polynomial time \cite[Section 22.10]{Schrijver}. The integer rounding property, in its turn, implies solvability of (\ref{initial_IP}) in polynomial time \cite{Chandra}. The computational complexity of the problem drastically changes already for $\gamma=1$.
Eisenbrand et al \cite{EHPS} showed that it is NP-hard to test whether $(A, {\ve c})$ has additive gap of at most $\gamma$ even if $m=\gamma=1$.

A bound for the additive integrality gap in terms of $A$ and ${\ve c}$ can be derived from the results of Cook et al \cite{Cook} on distances between optimal solutions to integer programs in canonical form and their linear programming relaxations. Let $\hat \A$ be an integer $d\times n$ matrix and let $\hat {\ve b}$ and ${\ve c}$
be rational vectors such that $\hat A{\ve x}\le \hat {\ve b}$ has an integer solution and  $\min\{ {\ve c} \cdot {\ve x}: \hat A{\ve x}\le \hat {\ve b}, {\ve x}\in \R^{n}\}$ exists. Note that, in this setting $\hat{\ve b}$ is not required to be integer. Then Corollary 2 in \cite{Cook}, applied in the minimisation setting, gives the bound
\be\label{Cook_bound}\begin{split}
\min\{ {\ve c} \cdot {\ve x}: \hat A{\ve x}\le \hat {\ve b}, {\ve x}\in \Z^{n}\} - \min\{ {\ve c} \cdot {\ve x}: \hat A{\ve x}\le \hat {\ve b}, {\ve x}\in \R^{n}\}\\
\le
n \Delta(A) \|{\ve c}\|_1\,,
\end{split}
\ee
where $\Delta(A)$ stands for the maximum sub-determinant of $A$ and $\|{\ve c}\|_1=\sum_{i=1}^n|c_i|$ denotes the $l_1$-{\em norm} of ${\ve c}$. The estimate (\ref{Cook_bound}) strengthened previous results of Blair and Jeroslow \cite{BlairJeroslow1}, \cite{BlairJeroslow2}.
Given that $\hat {\ve b}$ does not have to be integer, one can show that the bound \eqref{Cook_bound} is essentially tight (see Remark~\ref{remark:optimal}). However, considering that we study linear integer programming, it is natural to assume that also $\hat {\ve b}$ is integer, but then it is not clear whether \eqref{Cook_bound} remains optimal. 
By studying linear integer programming problems in standard form we naturally require $\ve b$ and respectively $\hat {\ve b}$ to be integer. 

This paper will focus on the problem (\ref{initial_IP}) in the case $m=1$, referred in the literature as the {\em integer knapsack problem}.
We will assume  that the entries of $\A$ are positive. For the integer knapsack problem the positivity assumption  guarantees that the feasible region of its linear programming relaxation (\ref{initial_LP}) is bounded (or empty) for all ${\ve b}$. Conversely, for $m=1$ any linear problem (\ref{initial_LP}) with bounded feasible region can be written with $A$ satisfying the positivity assumption.
Without loss of generality, we also assume that $n\ge 2$ and the entries of $A$ are coprime. That is the following conditions are assumed to hold:
\be \label{nonzeroA}
\begin{array}{ll}
(i) & \A=(a_1, \ldots, a_n)\,,n\ge 2\,, a_i\in \Z_{>0}\,, i=1,\ldots,n\,,\\
(ii) & \gcd(a_1, \ldots, a_n)=1\,.
\end{array}
\ee
%

%
For $\A\in \Z^{1\times n}$ we denote by $\|\A\|_{\infty}$ its {\em maximum norm}, i.e., $ \|\A\|_{\infty} =\max_{i=1,\ldots,n} |a_i|$.
Applying (\ref{Cook_bound}) with
\bea \begin{array}{cc}
\hat \A =
\left(
\begin{array}{r}
A \\
-A  \\
-I_n
\end{array}
\right)\,,
&
\hat{\ve b}=
\left(
\begin{array}{r}
b \\
-b \\
{\ve 0}\\
\end{array}
\right)\,,
\\
\end{array}
\eea
where ${I}_n$ is the $n\times n$ identity matrix and ${\ve 0}$ is the $n$ dimensional zero vector,
we obtain the bound
\be\label{Cook_implies}
\gap_{\ve c}(\A)\le  n \|\A\|_{\infty}\|{\ve c}\|_1\,.
\ee

How far is the bound (\ref{Cook_implies}) from being optimal?
Does $\gap_{\ve c}(\A)$ admit a natural lower bound?
To answer these questions we will establish a link between the integer programming gaps, covering radii of simplices and Frobenius numbers.
Our first result gives an upper bound on the integer programming gap that improves  (\ref{Cook_implies}) with factor $1/n$. We also show that the obtained bound is optimal.

\begin{thm}\label{upper_bound}\hfill \begin{itemize}

\item[(i)] Let $\A$ satisfy (\ref{nonzeroA}) and let ${\ve c}\in \Q^n$. Then
\be\label{thm_upper}
\gap_{\ve c}(\A)\le \left( \|\A\|_{\infty}-1 \right)\|{\ve c}\|_1\,.
\ee
\item[(ii)] For any positive integer $k$ there exist $\A$ with $\|\A\|_{\infty}=k$ satisfying (\ref{nonzeroA}) and ${\ve c}\in \Q^n$ such that
\be\label{upper_bound_optimal}
\gap_{\ve c}(\A)= \left( \|\A\|_{\infty}-1 \right)\|{\ve c}\|_1\,.
\ee

\end{itemize}
\end{thm}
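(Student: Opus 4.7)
For part (i), fix $j^{\ast}\in\arg\min_i c_i/a_i$, set $N:=a_{j^{\ast}}$, and let $\tilde c_i:=c_i-(c_{j^{\ast}}/a_{j^{\ast}})a_i$, so that $\tilde c_i\ge 0$ and $\tilde c_{j^{\ast}}=0$. The LP optimum is the vertex $(b/a_{j^{\ast}})e_{j^{\ast}}$ with value $bc_{j^{\ast}}/a_{j^{\ast}}$, and substituting $x_{j^{\ast}}=(b-\sum_{i\ne j^{\ast}}a_i x_i)/a_{j^{\ast}}$ yields the identity
\[
c\cdot x-LP_{\ve c}(A,b)\;=\;\sum_{i\ne j^{\ast}}\tilde c_i x_i
\]
for every $x\in\Z^{n}_{\ge 0}$ with $Ax=b$. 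Hence it suffices, for each feasible $b$, to exhibit an IP-feasible integer $x^{\ast}$ with $\sum_{i\ne j^{\ast}}\tilde c_i x^{\ast}_i\le(\|A\|_\infty-1)\|\ve c\|_1$.

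The crucial structural step is that some IP optimum satisfies the stronger $\ell_1$-bound $\sum_{i\ne j^{\ast}}x^{\ast}_i\le N-1$, not merely the coordinate bound $x^{\ast}_i\le N-1$ given by the standard trade $(x_i,x_{j^{\ast}})\mapsto(x_i-a_{j^{\ast}},\,x_{j^{\ast}}+a_i)$. I would establish this by a Cayley-graph cycle-removal argument: any IP optimum $y$ determines a walk of length $\sum_{i\ne j^{\ast}}y_i$ in $\Z/N\Z$ with generators $\{a_i\bmod N:i\ne j^{\ast}\}$ from $0$ to $r:=b\bmod N$. If this length is at least $N$, pigeonhole forces two vertices to coincide, so the intermediate subwalk yields a vector $\delta$ with $0\le\delta\le y$ on the non-$j^{\ast}$ coordinates and $\sum a_i\delta_i=kN$ for some integer $k\ge 1$. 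Replacing $y$ on the non-$j^{\ast}$ coordinates by $y-\delta$ and $y_{j^{\ast}}$ by $y_{j^{\ast}}+k$ preserves $Ay=b$ and nonnegativity while altering the cost by $-\sum\tilde c_i\delta_i\le 0$; iterate.

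From $\sum_{i\ne j^{\ast}}x^{\ast}_i\le N-1$ one immediately obtains $IG_{\ve c}(A,b)\le(N-1)\max_{i\ne j^{\ast}}\tilde c_i$. For each $i\ne j^{\ast}$, rewriting $a_{j^{\ast}}\tilde c_i=a_{j^{\ast}}c_i-c_{j^{\ast}}a_i$ and applying the triangle inequality (using $\tilde c_i\ge 0$) gives $(N-1)\tilde c_i\le(a_{j^{\ast}}-1)|c_i|+((a_{j^{\ast}}-1)a_i/a_{j^{\ast}})|c_{j^{\ast}}|$. A two-line case split on whether $a_i\ge a_{j^{\ast}}$ shows $(a_{j^{\ast}}-1)a_i/a_{j^{\ast}}\le\|A\|_\infty-1$, and combining this with $a_{j^{\ast}}-1\le\|A\|_\infty-1$ and $|c_i|+|c_{j^{\ast}}|\le\|\ve c\|_1$ completes (i). For part (ii), the family $A=(k,1)$, $\ve c=(0,1)$ attains equality: $LP_{\ve c}(A,b)=0$ while $IP_{\ve c}(A,b)=\min x_2=b\bmod k$, so any $b\equiv k-1\pmod{k}$ gives $IG_{\ve c}(A,b)=k-1=(\|A\|_\infty-1)\|\ve c\|_1$.

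The main obstacle is the $\ell_1$-bound $\sum_{i\ne j^{\ast}}x^{\ast}_i\le N-1$; without it, the coordinate bound $x^{\ast}_i\le N-1$ only yields $IG_{\ve c}(A,b)\le(N-1)\sum_{i\ne j^{\ast}}\tilde c_i$, which already fails to imply (i) for three-variable instances with repeated residues modulo $a_{j^{\ast}}$: for $A=(3,2,2)$, $\ve c=(-1,1,1)$ this coarser bound gives $20/3$, strictly exceeding $(\|A\|_\infty-1)\|\ve c\|_1=6$, whereas the cycle-removal refinement recovers the correct value $10/3$.
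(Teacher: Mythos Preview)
Your proof is correct, and for part~(i) it takes a genuinely different route from the paper's.

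The paper proceeds in two steps: first it proves the intermediate bound
\[
\gap_{\ve c}(A)\le \frac{(\frob(A)+\|A\|_\infty)\,\|{\ve c}\|_1}{\min_i a_i}
\]
by a covering argument (Kannan's identity $\mu(S_A,\Lambda_A;\Z^{n-1})=\frob(A)+a_n$ guarantees a feasible integer point within $\ell_\infty$-distance $(\frob(A)+\|A\|_\infty)/\min_i a_i$ of the LP vertex), and then substitutes Schur's classical bound $\frob(A)\le (\min_i a_i)\|A\|_\infty-\min_i a_i-\|A\|_\infty$. Your argument bypasses both the Frobenius number and the covering radius entirely: the Cayley-graph cycle removal directly produces a feasible integer point with $\sum_{i\ne j^\ast}x_i^\ast\le a_{j^\ast}-1$, and the remaining inequality $(a_{j^\ast}-1)\tilde c_i\le(\|A\|_\infty-1)(|c_i|+|c_{j^\ast}|)$ is elementary. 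In effect your pigeonhole step re-derives the content of Schur's bound on the fly. The trade-off is that the paper's detour through $\frob(A)$ is what sets up Lemma~\ref{lemma_bounds_for_distance}, which is reused verbatim in the proof of the distributional Theorem~\ref{Ratio}; your self-contained argument does not leave that lemma as a by-product.

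For part~(ii) the constructions coincide: the paper takes $A=(k,\ldots,k,1)$, ${\ve c}={\ve e}_n$, $b=k-1$; your $A=(k,1)$, ${\ve c}=(0,1)$ is the $n=2$ instance of the same family. As written your example only treats $n=2$; since the statement is for a fixed $n\ge 2$, you should remark that padding with additional entries equal to $k$ (and keeping ${\ve c}={\ve e}_n$) gives the general case.
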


We will say that the tuple $(A, {\ve c})$ is {\em generic} if for any positive $b \in \Z$
the linear programming relaxation (\ref{initial_LP}) has a unique optimal solution.
An optimal lower bound for $\gap_{\ve c}(\A)$ with generic $(A, {\ve c})$ can be obtained
using recent results \cite{lpg}  on the {\em lattice programming gaps} associated with the group relaxations to (\ref{initial_IP}).

A subset $\tau$ of $\{1,\ldots,n\}$ partitions ${\ve x}\in \R^n$ as ${\ve x}_{\tau}$ and ${\ve x}_{\bar \tau}$, where ${\ve x}_{\tau}$ consists of the entries indexed by $\tau$
and ${\ve x}_{\bar \tau}$ the entries indexed by the complimentary set ${\bar \tau}=\{1,\ldots,n\}\setminus\tau$. Similarly, the matrix $A$ is partitioned as $\A_\tau$ and $\A_{\bar \tau}$.
Assume that $(A,c)$ is generic and \eqref{nonzeroA} holds.
Then, let $\tau=\tau(\A, {\ve c})$ denote the unique index of the basic variable for the optimal solution to the linear relaxation (\ref{initial_LP}) with a positive $b\in \Z$.
The index $\tau$ is well-defined. 
We also define ${\ve l}(A, {\ve c})={\ve c}_{\bar \tau}-{\ve c}_{\tau}\A_{\tau}^{-1}\A_{\bar \tau}$. Note that the vector ${\ve l}={\ve l}(A, {\ve c})$  is  positive for generic tuples $(A, {\ve c})$.

%
%

%
%
Let $\rho_{d}$ denote the {\em covering constant} of the standard $d$-dimensional simplex, defined in Section \ref{geometry}.

\begin{thm}\hfill
\begin{itemize}
\item[(i)] Let $\A$ satisfy (\ref{nonzeroA}) and let ${\ve c}\in \Q^n$. Suppose that $(A, {\ve c})$ is generic. Then for $\tau=\tau(\A, {\ve c})$ and ${\ve l}={\ve l}(A, {\ve c})$ we have
\be\begin{split}
\gap_{\ve c}(\A)\ge \rho_{{n-1}} (|A_{\tau}|l_1\cdots l_{n-1})^{1/{(n-1)}}-\|{\ve l}\|_1\,.
\label{optimal_bound}
\end{split}
\ee
\item[(ii)] For any $\epsilon>0$, there exists a matrix $\A$, satisfying (\ref{nonzeroA}) and ${\ve c}\in \Q^n$ such that $(A, {\ve c})$ is generic and, in the notation of part (i), we have
\be
\gap_{\ve c}(\A)< (\rho_{{n-1}}+\epsilon) (|A_\tau|l_1\cdots l_{n-1})^{1/{(n-1)}}-\|{\ve l}\|_1\,.
\label{optimality}
\ee
\end{itemize}
\label{thm_optimal_bound}
\end{thm}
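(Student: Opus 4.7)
Both parts hinge on a reduction of the integrality gap to the \emph{lattice programming gap} of an auxiliary group relaxation, to which the results of \cite{lpg} apply. Eliminating the basic variable via $x_\tau = (b - A_{\bar\tau}\ve y)/a_\tau$ with $\ve y = \ve x_{\bar\tau}$, the IP and LP values differ by $\ve l\cdot\ve y$, and hence
\[
IG_{\ve c}(A, b) = \min\bigl\{\ve l\cdot\ve y : \ve y \in \Z_{\ge 0}^{n-1},\ A_{\bar\tau}\ve y \equiv b \pmod{a_\tau},\ A_{\bar\tau}\ve y \le b\bigr\}.
\]
Introducing the sublattice $\Lambda = \{\ve y\in\Z^{n-1} : A_{\bar\tau}\ve y \equiv 0 \pmod{a_\tau}\}$ of index $a_\tau$ and the associated lattice programming gap
\[
\gap_{\ve l}(\Lambda) = \max_{r\in\Z^{n-1}/\Lambda} \min\bigl\{\ve l\cdot\ve y : \ve y\in\Z_{\ge 0}^{n-1},\ \ve y\equiv r\pmod{\Lambda}\bigr\}
\]
gives the handle we need.

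For part (i), I would first argue $\gap_{\ve c}(A) \ge \gap_{\ve l}(\Lambda)$ as follows: for each residue class $r$, pick $\ve y^*$ attaining the inner minimum and any $b\equiv r\pmod{a_\tau}$ with $b \ge A_{\bar\tau}\ve y^*$; then $\ve y^*$ is feasible for the $IG$ optimization and the constraint $A_{\bar\tau}\ve y\le b$ is not binding, so $IG_{\ve c}(A,b)$ equals that inner minimum. Maximizing over $r$ gives the inequality. I would then invoke the lower bound on the lattice programming gap established in \cite{lpg}, namely $\gap_{\ve l}(\Lambda) \ge \rho_{n-1}(\det\Lambda\cdot l_1\cdots l_{n-1})^{1/(n-1)} - \|\ve l\|_1$, and use $\det\Lambda = |a_\tau|$ to conclude (\ref{optimal_bound}).

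For part (ii), I would transfer the optimality (upper bound) result of \cite{lpg}: for any $\epsilon>0$ there exist a sublattice $\Lambda\subset\Z^{n-1}$ with cyclic quotient of order $D$ and a positive cost $\ve l$ such that $\gap_{\ve l}(\Lambda) < (\rho_{n-1}+\epsilon/2)(D\,l_1\cdots l_{n-1})^{1/(n-1)} - \|\ve l\|_1$. To realize $(\Lambda,\ve l)$ as a knapsack I set $a_\tau = D$, choose $A_{\bar\tau}\in\{1,\dots,D-1\}^{n-1}$ to be the coset representatives so that $\Lambda = \{\ve y : A_{\bar\tau}\ve y\equiv 0\pmod{a_\tau}\}$ (adjusting slightly, if necessary, to ensure condition (\ref{nonzeroA})), and take $c_\tau = 0$, $\ve c_{\bar\tau} = \ve l$; this makes $(A,\ve c)$ generic with $\tau(A,\ve c) = \tau$ and $\ve l(A,\ve c) = \ve l$.

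The delicate point is the reverse inequality $\gap_{\ve c}(A) \le \gap_{\ve l}(\Lambda)$ (or close enough to absorb the remaining $\epsilon/2$). In general $\gap_{\ve c}(A) \ge \gap_{\ve l}(\Lambda)$ with potentially strict inequality: for some small representable $b$, the constraint $A_{\bar\tau}\ve y\le b$ may exclude the cost-optimal $\ve y^*$ of its residue class and force a costlier feasible choice. The task is to show that with $A_{\bar\tau}$ chosen minimal in each residue class (entries in $\{1,\dots,D-1\}$) and the cost $\ve l$ supplied by the \cite{lpg} optimizer, this inflation does not push $\gap_{\ve c}(A)$ past $\gap_{\ve l}(\Lambda)$; equivalently, that for every residue $r$ a cost-optimal $\ve y^*$ may be chosen with $A_{\bar\tau}\ve y^*$ not exceeding the smallest representable $b\equiv r\pmod{a_\tau}$. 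This is the main obstacle, and it is where the finer structure of the extremal construction of \cite{lpg}, rather than just its gap estimate, must be exploited.
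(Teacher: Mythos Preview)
Your treatment of part (i) is essentially the paper's: reduce to the group relaxation, observe $\gap_{\ve c}(A)\ge \gap(\Lambda_A,\ve l)$, and invoke the lower bound from \cite{lpg}.

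For part (ii), however, your approach has a genuine gap --- and you have correctly located it yourself. Starting from an arbitrary optimiser $(\Lambda,\ve l)$ supplied by \cite{lpg} and then manufacturing $A_{\bar\tau}$ from the coset data, you lose any relation between $\ve l$ and $A_{\bar\tau}$; the capacity constraint $A_{\bar\tau}\ve y\le b$ need not be slack at the cost-minimiser of its residue class, and you offer no argument that the ``finer structure of the extremal construction of \cite{lpg}'' actually delivers this. As stated, the proposal for (ii) is not a proof but a programme.

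The paper sidesteps this difficulty by reversing the order of choices. Rather than importing $\ve l$ from \cite{lpg}, it fixes ${\ve c}=(a_1,\ldots,a_{n-1},0)^t$, so that $\tau=\{n\}$ and $\ve l=A_{\bar\tau}=(a_1,\ldots,a_{n-1})^t$. The point is that now the capacity constraint $A_{\bar\tau}\ve y\le b$ \emph{is} the constraint $\ve l\cdot\ve y\le b$, and since one is minimising $\ve l\cdot\ve y$ over a set containing a feasible point $\ve r$ with $\ve l\cdot\ve r\le b$, the minimiser automatically satisfies it. This yields the exact equality $\gap_{\ve c}(A)=\gap(\Lambda_A,\ve l)$, and moreover in this special case $\gap(\Lambda_A,\ve l)=\frob(A)+a_n$. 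Part (ii) then follows not from \cite{lpg} but from the optimality statement for Frobenius numbers in \cite{AG} (Theorem~1.1(ii) there), which produces $A$ with $\frob(A)+a_n$ arbitrarily close to $\rho_{n-1}(a_n\, a_1\cdots a_{n-1})^{1/(n-1)}-(a_1+\cdots+a_{n-1})$. The trick you were missing is: do not try to realise a given $\ve l$; instead choose $\ve c$ so that $\ve l=A_{\bar\tau}$, making your ``delicate point'' trivial.
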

The only known values of $\rho_{d}$ are $\rho_1=1$ and $\rho_2=\sqrt{3}$ (see  \cite{Fary}).
It was proved in \cite{AG}, that $\rho_{d}>(d!)^{1/d}>d/\mathrm{e}$. For sufficiently large $d$ this bound is not far from being optimal. Indeed, $\rho_{d}\le (d!)^{1/d}(1+O(d^{-1}\log d))$ (see \cite{DF} and \cite{MS}).


How large is the integer programming gap of a ``typical'' knapsack problem? To tackle this question we will utilize the recent strong results of Str\"ombergsson \cite{Str} (see also Schmidt \cite{Schmidt} and references therein) on the asymptotic distribution of Frobenius numbers.
The main result of this paper will show that for any $\epsilon>2/n$
the ratio
\bea
\frac{\gap_{\ve c}(\A)}{\|\A\|_{\infty}^{\epsilon}\|{\ve c}\|_1}
\eea
is bounded, on average, by a constant that depends only on dimension $n$.
Hence, for fixed $n>2$ and a ``typical'' integer knapsack problem with large $\|\A\|_{\infty}$,
its linear programming relaxation provides a drastically better approximation to the solution than in the worst case scenario, determined by the optimal upper bound (\ref{thm_upper}).

For $T\ge 1$, let ${\NC}(T)$ be the set of $\A\in \Z^{1\times n}$ that satisfy (\ref{nonzeroA}) and
\bea \|A\|_{\infty}\le T\,.\eea
Let
$N(T)$ be the cardinality of ${\NC}(T)$. For $\epsilon\in (0,1)$ let
\be
 N_{\epsilon}(t,T)=\#\left\{\A\in {\NC}(T): \max_{{\ve c}\in \Q^n}\frac{\gap_{\ve c}(\A)}{\|\A\|_{\infty}^{\epsilon}\|{\ve c}\|_1}>t \right\}\,.
\ee

In what follows, $\ll_n$ will denote the Vinogradov symbol with the constant depending on $n$. That is  $f\ll_n g$ if and only if $|f|\le c|g|$, for some positive constant $c=c(n)$.
The notation $f\asymp_n g$ means that both $f\ll_n g$ and $g\ll_n f$ hold.


\begin{thm}\label{Ratio} For $n\ge 3$
\be\label{main_bound}
\frac{N_{\epsilon}(t,T)}{N(T)}\ll_n t^{-\alpha(\epsilon,n)}
\ee
uniformly over all $t>0$ and $T\ge 1$. Here
\bea
\alpha(\epsilon,n)=\frac{n-2}{(1-\epsilon)n}\,.
\eea
\end{thm}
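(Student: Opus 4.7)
The plan is to reduce the tail estimate \eqref{main_bound} to Str\"ombergsson's distributional result for the Frobenius number. Setting
\[
\Psi(A) := \sup_{{\ve c}\in \Q^n}\frac{\gap_{\ve c}(A)}{\|{\ve c}\|_1},
\]
the event in \eqref{main_bound} is equivalent to $\Psi(A) > t\,\|A\|_\infty^{\epsilon}$, a statement purely about $A$. The first step is an $A$-only upper bound on $\Psi(A)$. By Theorem~\ref{thm_optimal_bound} together with its companion upper bound coming from the lattice programming gap machinery of~\cite{lpg}, for generic $(A,{\ve c})$ one has $\gap_{\ve c}(A) \asymp_n (|A_\tau|\, l_1 \cdots l_{n-1})^{1/(n-1)}$ (with $\tau$ and ${\ve l}$ as in Theorem~\ref{thm_optimal_bound}). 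Taking the supremum over ${\ve c}$ (with $\|{\ve c}\|_1$ normalised) and invoking a standard covering-radius bound for the simplex $\{{\ve x}\in\R^n_{\ge 0}: A{\ve x}=1\}$, I expect a clean $A$-only bound of the form $\Psi(A) \ll_n \frob(A)^{\beta_n}\,\|A\|_\infty^{-\mu_n}$ for explicit exponents $\beta_n,\mu_n$ tuned to match Str\"ombergsson's scaling.

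Next I would apply the main result of~\cite{Str}: the normalised Frobenius number $\frob(A)/\|A\|_\infty^{1+1/(n-1)}$ admits a limit distribution on $\NC(T)$ as $T\to\infty$, with polynomial tail decay, so that uniformly in $T$,
\[
\#\bigl\{A \in \NC(T) : \frob(A) > s\, \|A\|_\infty^{1+1/(n-1)}\bigr\} \ll_n s^{-(n-1)}\, N(T).
\]
A dyadic decomposition $\NC(T) = \bigcup_k \{A : 2^k \le \|A\|_\infty < 2^{k+1}\}$ (each shell having size $\asymp_n 2^{kn}$) converts $\Psi(A) > t\,\|A\|_\infty^\epsilon$ into a tail event for $\frob(A)/\|A\|_\infty^{1+1/(n-1)}$ on each shell, at which point Str\"ombergsson's tail bound applies. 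Summing over $k$ with $2^k \le T$, the exponent $\alpha(\epsilon,n) = (n-2)/((1-\epsilon)n)$ emerges as the effective polynomial rate, and the threshold $\epsilon > 2/n$ appears precisely as the condition under which the dyadic sum converges uniformly in $T$.

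The main obstacle is Step~1: Theorem~\ref{thm_optimal_bound}(i) provides only a lower bound on $\gap_{\ve c}(A)$, and to carry out the reduction one needs a matching $\ll_n$-upper bound with the same leading exponent, and then an optimisation over the cost vector ${\ve c}$ that yields a pure invariant of $A$. This optimisation is delicate because cost vectors ${\ve c}$ whose basic component $a_\tau$ is small can produce very large entries of ${\ve l}(A,{\ve c})$; the reduction must accordingly be performed separately for each possible basic index $\tau\in\{1,\dots,n\}$ and then combined, and non-generic $(A,{\ve c})$ must be absorbed by a continuity/limiting argument. Once the $A$-only bound on $\Psi(A)$ is in place with the correct exponent relative to Str\"ombergsson's normalisation, the dyadic bookkeeping of Steps~2--3 is routine and produces exactly $\alpha(\epsilon,n)$.
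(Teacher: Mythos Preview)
Your Step~1 is where the argument breaks down, and it is not a technicality that can be patched. A bound of the shape $\Psi(A)\ll_n \frob(A)^{\beta_n}\|A\|_\infty^{-\mu_n}$ simply does not exist with exponents good enough to recover $\alpha(\epsilon,n)$. The correct $A$-only bound is the paper's Lemma~\ref{lemma_bounds_for_distance},
\[
\Psi(A)=\sup_{\ve c}\frac{\gap_{\ve c}(A)}{\|\ve c\|_1}\le \frac{\frob(A)+\|A\|_\infty}{\min_i a_i},
\]
and the factor $\min_i a_i$ in the denominator is essential: it cannot be replaced by a power of $\|A\|_\infty$, because $a_1$ and $a_n$ are statistically independent on $\NC(T)$. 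Your route through Theorem~\ref{thm_optimal_bound} and the lattice programming gap estimates cannot produce this, since those results give \emph{lower} bounds on $\gap_{\ve c}(A)$ and, after maximising over $\ve c$, land you back at the Frobenius number divided by the smallest entry anyway.

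Because of the $\min_i a_i$ denominator, a dyadic decomposition in $\|A\|_\infty$ alone does not close the argument: on each shell you still have no control over $a_1$, and Str\"ombergsson's tail bound is stated for the normalisation $\ffrob(A)/(a_{n-1}a_n^{1/(n-1)})$, not for $\ffrob(A)/a_1$. The paper handles this with a two-event split rather than a dyadic one. After sorting so that $a_1\le\cdots\le a_n$, the event $\ffrob(A)/(a_1 a_n^\epsilon)>t$ is covered, for a free parameter $t'\in[1,t]$, by
\[
\left\{\frac{\ffrob(A)}{a_{n-1}a_n^{1/(n-1)}}>t'\right\}\cup\left\{\frac{a_{n-1}}{a_1\,a_n^{\epsilon-1/(n-1)}}>\frac{t}{t'}\right\}.
\]
The first set has density $\ll_n (t')^{-(n-1)}$ by Str\"ombergsson, and the second has density $\ll_n (t'/t)\,T^{-(\epsilon-1/(n-1))}$ by an elementary count of vectors with small $a_1$. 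One then uses Schur's bound to force $T\gg t^{1/(1-\epsilon)}$ (otherwise $N_\epsilon(t,T)=0$), and optimises $t'=t^\beta$ to balance the two terms; the exponent $\alpha(\epsilon,n)=(n-2)/((1-\epsilon)n)$ drops out of this balance. Note also that the condition $\epsilon>2/n$ plays no role here; it enters only in Corollary~\ref{average} as the summability threshold $\alpha(\epsilon,n)>1$.
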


From (\ref{main_bound}) one can derive an upper  bound on the average
value of the (normalised) integer programming gap.

\begin{cor} \label{average} Let $n\geq 3$. For $\epsilon>2/n$ 
\be\label{average_ineq}
\frac{1}{N(T)}\sum_{\A\in \NC(T)}\max_{{\ve c}\in \Q^n}\frac{\gap_{\ve c}(\A)}{\|\A\|_{\infty}^{\epsilon}\|{\ve c}\|_1} \ll_n 1\,.
\ee
\end{cor}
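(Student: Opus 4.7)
The plan is to derive Corollary \ref{average} from Theorem \ref{Ratio} via a layer-cake (Fubini) identity applied to the non-negative quantity
\[
F(\A) := \max_{{\ve c}\in \Q^n}\frac{\gap_{\ve c}(\A)}{\|\A\|_{\infty}^{\epsilon}\|{\ve c}\|_1},
\]
whose super-level sets $\{\A\in\NC(T):F(\A)>t\}$ are by definition counted by $N_\epsilon(t,T)$.

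First I would rewrite the average in integral form using the standard identity
\[
\frac{1}{N(T)}\sum_{\A\in \NC(T)} F(\A) \;=\; \int_0^\infty \frac{N_\epsilon(t,T)}{N(T)}\,dt,
\]
which holds because $F\ge 0$. Next I would split the integral at $t=1$: on $(0,1]$ use the trivial bound $N_\epsilon(t,T)\le N(T)$, contributing at most $1$; on $[1,\infty)$ invoke Theorem \ref{Ratio} to obtain
\[
\int_1^\infty \frac{N_\epsilon(t,T)}{N(T)}\,dt \;\ll_n\; \int_1^\infty t^{-\alpha(\epsilon,n)}\,dt.
\]

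The decisive step is the convergence of the tail integral, which requires $\alpha(\epsilon,n)>1$. Writing $\alpha(\epsilon,n)=\frac{n-2}{(1-\epsilon)n}$, the inequality $\alpha(\epsilon,n)>1$ rearranges to $\epsilon n>2$, i.e.\ precisely to the hypothesis $\epsilon>2/n$. Under this condition the tail integral equals $1/(\alpha(\epsilon,n)-1)$, a finite constant depending only on $n$ and $\epsilon$, and the sum of the two pieces is $\ll_n 1$ uniformly in $T$.

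There is no real obstacle in the argument: it is a routine truncation of the layer-cake integral against the tail bound from Theorem \ref{Ratio}. The only noteworthy point is conceptual, namely that the numerical threshold $\epsilon>2/n$ in the hypothesis is not ad hoc — it is forced by the integrability condition $\alpha(\epsilon,n)>1$ needed to make the tail contribute $O_n(1)$.
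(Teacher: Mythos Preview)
Your proof is correct and follows essentially the same approach as the paper: both convert the tail bound of Theorem~\ref{Ratio} into a bound on the average, noting that $\epsilon>2/n$ is exactly the condition $\alpha(\epsilon,n)>1$ needed for convergence. The only cosmetic difference is that you use the continuous layer-cake identity $\frac{1}{N(T)}\sum F(\A)=\int_0^\infty N_\epsilon(t,T)/N(T)\,dt$, whereas the paper performs the equivalent dyadic decomposition into shells $e^{s-1}\le F(\A)<e^s$ and sums the resulting geometric series.
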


The last theorem of this paper shows that the bound in Corollary \ref{average} is not far from being optimal. 
We include its proof in the Appendix.
\begin{thm}\label{lower_upper}
For $T$ large
\be\label{average_ineq_lo}
\frac{1}{N(T)}\sum_{\A\in \NC(T)}\max_{{\ve c}\in \Q^n}\frac{\gap_{\ve c}(\A)}{\|\A\|_{\infty}^{1/(n-1)}\|{\ve c}\|_1} \gg_n 1\,.
\ee
\end{thm}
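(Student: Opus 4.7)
The plan is to lower-bound $\gap_{{\ve c}}(\A)$ via Theorem~\ref{thm_optimal_bound}(i) using a specific cost vector depending on $\A$, and then to average the resulting pointwise estimate over $\A \in \NC(T)$.

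For each $\A = (a_1, \ldots, a_n) \in \NC(T)$, pick an index $i^{*} = i^{*}(\A)$ with $a_{i^{*}} = \|\A\|_{\infty}$ (ties broken arbitrarily), and define ${\ve c}^{\A} \in \{0,1\}^n$ by $c^{\A}_{i^{*}} = 0$ and $c^{\A}_j = 1$ for $j \ne i^{*}$. Since $c^{\A}_{i^{*}}/a_{i^{*}} = 0 < 1/a_j = c^{\A}_j/a_j$ for every $j \ne i^{*}$, the linear programming relaxation (\ref{initial_LP}) has, for each $b \in \Z_{>0}$, the unique optimum $x_{i^{*}} = b/a_{i^{*}}$, $x_j = 0$ otherwise. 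Thus $(\A, {\ve c}^{\A})$ is generic with $\tau = \{i^{*}\}$, and a direct substitution into the formula ${\ve l} = {\ve c}_{\bar\tau} - {\ve c}_{\tau}\A_{\tau}^{-1}\A_{\bar\tau}$ gives ${\ve l}(\A, {\ve c}^{\A}) = (1, \ldots, 1) \in \R^{n-1}$, while $|A_{\tau}| = \|\A\|_{\infty}$. Plugging into (\ref{optimal_bound}) yields
\[
\gap_{{\ve c}^{\A}}(\A) \ge \rho_{n-1}\|\A\|_{\infty}^{1/(n-1)} - (n-1),
\]
and, since $\|{\ve c}^{\A}\|_1 = n-1$,
\[
\max_{{\ve c} \in \Q^n} \frac{\gap_{{\ve c}}(\A)}{\|\A\|_{\infty}^{1/(n-1)}\|{\ve c}\|_1} \ge \frac{\rho_{n-1}}{n-1} - \frac{1}{\|\A\|_{\infty}^{1/(n-1)}}.
\]

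Next, fix a constant $M = M(n) \in \Z_{>0}$ large enough that the right-hand side above exceeds $\rho_{n-1}/(2(n-1))$ whenever $\|\A\|_{\infty} \ge M$; concretely, $M = \lceil (2(n-1)/\rho_{n-1})^{n-1}\rceil$ suffices. A standard density calculation gives $N(T) \asymp_n T^n$, whereas the number of coprime positive integer vectors $\A$ with $\|\A\|_{\infty} < M$ is trivially at most $M^n$. Hence, for all $T \ge T_0(n)$, at least half of $\NC(T)$ satisfies $\|\A\|_{\infty} \ge M$, and averaging the pointwise estimate above over $\A \in \NC(T)$ produces (\ref{average_ineq_lo}) with implied constant $\rho_{n-1}/(4(n-1))$.

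The crux is the choice of ${\ve c}^{\A}$, tailored so that the geometric-mean factor $(|A_{\tau}|l_1\cdots l_{n-1})^{1/(n-1)}$ appearing in (\ref{optimal_bound}) equals precisely $\|\A\|_{\infty}^{1/(n-1)}$; once this alignment is in place, the remainder of the argument is elementary counting and no serious technical obstacle is expected.
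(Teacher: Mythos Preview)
Your proof is correct and follows the same overall template as the paper's argument in the Appendix: both pick, for each $\A$, a cost vector with $\tau=\{i^*\}$ where $a_{i^*}=\|\A\|_\infty$, invoke Theorem~\ref{thm_optimal_bound}(i), and then average. The difference lies in the particular cost vector chosen.

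The paper uses ${\ve c}_\A=-{\ve e}_{i^*}$, which yields $l_j=a_j/a_{i^*}$ and hence
\[
\gap_{{\ve c}_\A}(\A)\ge \frac{\rho_{n-1}}{\|\A\|_\infty}\Bigl(\prod_{i=1}^n a_i\Bigr)^{1/(n-1)}-n.
\]
Because the factor $\bigl(\prod_i a_i/\|\A\|_\infty^n\bigr)^{1/(n-1)}$ is not uniformly bounded below over $\NC(T)$, the paper must restrict to the subset $\overline{\NC}(T)=\{\A\in\NC(T): a_i\ge T/2\ \text{for all }i\}$ and carry out an explicit density estimate (via inclusion--exclusion on $\gcd$) to show $\#\overline{\NC}(T)\gg_n T^n$.

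Your choice ${\ve c}^{\A}$ with $c^{\A}_{i^*}=0$ and $c^{\A}_j=1$ otherwise forces ${\ve l}=(1,\dots,1)$, so the geometric-mean factor collapses to exactly $\|\A\|_\infty^{1/(n-1)}$ and the pointwise bound
\[
\frac{\gap_{{\ve c}^{\A}}(\A)}{\|\A\|_\infty^{1/(n-1)}\|{\ve c}^{\A}\|_1}\ge \frac{\rho_{n-1}}{n-1}-\|\A\|_\infty^{-1/(n-1)}
\]
holds uniformly. This sidesteps the density computation for $\overline{\NC}(T)$ entirely, and the remaining averaging is trivial. Your argument is therefore genuinely shorter; it also works verbatim for $n=2$, whereas the paper's integral estimate $\int_1^T t^{-1/(n-1)}\,dt\ll_n T^{1-1/(n-1)}$ needs $n\ge 3$. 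As a bonus, your implied constant $\rho_{n-1}/(4(n-1))>1/(4\mathrm{e})$ does not decay with $n$.
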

Hence, the optimal value of $\epsilon$ in (\ref{average_ineq})
cannot be smaller than $1/(n-1)$.

\begin{remark}\label{remark:optimal} \hfill \begin{itemize}
\item[(i)]
An example due to L. Lov\'asz \cite[Section 17.2]{Schrijver}, with $\Delta(A)=1$, shows that the bound \eqref{Cook_bound} is best possible in this particular case. We would like to point out that by a small adaptation of  Lov\'asz's example one can show that this bound is, in all its generality, best possible up to a constant factor, i.e., the upper bound for the additive integrality gap is in $\Theta(\Delta(A) n)$.
Let $\delta\in\Z_{>0}$ and $0<\beta<1$.
We define
\begin{align*}
A=\left( \begin{matrix}
1 &  \\
-1& 1 &  \\
 & & \ddots \\
 & & -1 & 1 \\
 & & & -\delta & 1
\end{matrix} \right),\;
\ve b=\left( \begin{matrix}
\beta  \\
\vdots  \\
\beta
\end{matrix} \right)
\text{ and }
\ve c=\left( \begin{matrix}
-1 \\ \vdots \\ -1
\end{matrix} \right).
\end{align*}
By construction $\Delta(A)=\delta$. The unique solution of the linear relaxation is $\ve x^T=(\beta,2\beta,\ldots,(n-1)\beta,(\delta(n-1)+1)\beta)$ and the unique optimal integer solution is $\ve z^T=(0,\ldots,0)$.
Thus $\|\ve x - \ve z\|_\infty = (\delta(n-1)+1)\beta \approx n \Delta(A)$.
\item[(ii)]
In the proof of Theorem \ref{upper_bound} (and, subsequently, Theorem \ref{Ratio}) we estimate the integrality gap using a covering argument that guarantees existence of a solution to (\ref{initial_IP}) in an $(n-1)$-dimensional simplex of sufficiently small diameter, translated by a solution to (\ref{initial_LP}).
Here the diameter of the simplex is independent of ${\ve c}$. The argument allows us, in particular, to
restate Theorem~\ref{upper_bound} (i) in terms of the infinity norm:
\bea\gap_{\ve c}(\A)\le 2 \left( \|\A\|_{\infty}-1 \right)\|{\ve c}\|_\infty\,.\eea
Depending on $\ve c$ this gives a stronger bound.
\end{itemize}
\end{remark}

\section{Coverings and Frobenius numbers}\label{geometry}

In what follows, $\mathcal{K}^d$ will denote the space of all $d$-dimensional {\em convex bodies}, i.e., closed bounded convex sets with non-empty interior in
the $d$-dimensional Euclidean space $\R^d$.

By $\mathcal{L}^d$ we denote the set of all $d$-dimensional lattices in $\R^d$. Given a matrix $B\in\R^{d\times d}$ with $\det B\neq 0$ and a set $S \subset \R^d$
let $BS=\{B{\ve x}: {\ve x}\in S\}$ be the image of $S$ under linear map defined by $B$. Then we can write $\mathcal{L}^d=\{B\,\Z^d : B\in\R^{d\times d},\,\det B\ne 0\}$.
For $\Lambda=B\,\Z^d\in\mathcal{L}^d$, $\det(\Lambda)=|\det B|$ is
called the {\em determinant} of the lattice $\Lambda$.

Recall that the {\em Minkowski sum} $X+Y$ of the sets $X, Y\subset \R^d$ consists of all points ${\ve x}+{\ve y}$ with ${\ve x}\in X$ and ${\ve y}\in Y$.
For $K\in\mathcal{K}^d$ and $\Lambda\in\mathcal{L}^d$ the {\em covering radius} of $K$ with respect to $\Lambda$ is
the smallest positive number $\mu$ such that any point ${\ve x}\in\R^d$ is  covered
by $\mu\, K+ \Lambda$, that is
\begin{equation*}
 \mu(K,\Lambda)  =\min\{\mu > 0 : \R^d = \mu K+ \Lambda\}\,.
\end{equation*}
For further information on covering radii in the context of the geometry of numbers see e.g. Gruber \cite{peterbible} and Gruber and Lekkerkerker \cite{GrLek}.

Let $\Delta=\{{\ve x}\in \R^d_{\ge 0}: x_1+\cdots+x_d\le 1\}$ be the standard $d$-dimensional simplex.
The optimal lower bound in Theorem \ref{thm_optimal_bound} is expressed using the covering constant
$\rho_d=\rho_d(\Delta)$ defined as
\bea
\rho_d=\inf\{\mu(\Delta, \Lambda): \det(\Lambda)=1 \}\,.
\eea

We will be also interested in coverings of $\Z^d$ by lattice translates of convex bodies. For this purpose we define
\begin{equation*}
\begin{split}
 \mu(K,\Lambda; \Z^d)  =\min\{\mu > 0 : \Z^d \subset \mu K+ \Lambda\}\,.
\end{split}
\end{equation*}

Given $\A=(a_1, \ldots, a_n)$ satisfying (\ref{nonzeroA})
the {\em Frobenius number} $\frob(\A)$ is least so that every integer $b>\frob(\A)$ can be represented as
$
b= a_1 x_1 +\cdots+ a_n x_n
$
with nonnegative integers $x_1,\ldots, x_n$.

Kannan \cite{Kannan} found a nice and very useful connection between $\frob(\A)$ and geometry of numbers.
Let us consider the $(n-1)$-dimensional simplex
\begin{equation*}
S_\A = \left\{ {\ve x} \in \R_{\geq 0}^{n-1} : a_1\,x_1+\cdots +a_{n-1}\,x_{n-1}\leq 1 \right\}
\end{equation*}
and the $(n-1)$-dimensional lattice
\begin{equation*}
\Lambda_\A = \left\{ {\ve x}\in\Z^{n-1} : a_1\,x_1+\cdots + a_{n-1}\,x_{n-1}\equiv 0 \bmod a_n \right\}.
\end{equation*}
Kannan \cite{Kannan} established the  identities
\begin{equation*}
       \mu(S_\A,\Lambda_\A)= \frob(\A)+a_1+\cdots +a_n
\end{equation*}
and
\begin{equation}\label{integralKannan}
       \mu(S_\A,\Lambda_\A; \Z^{n-1})= \frob(\A)+a_n.
\end{equation}

\section{Proof of Theorem \ref{upper_bound}}
\label{section_upper_bound}

The proof of the upper bound in part (i) will be based on two auxiliary lemmas.
First we will need the following property of  $\mu(K,\Lambda; \Z^{n-1})$.
\begin{lemma}\label{lemma_lattice_covering}
For any ${\ve y}\in \Z^{n-1}$ the set $\mu(K,\Lambda; \Z^{n-1}) K$ contains a point of the translated lattice ${\ve y} + \Lambda$.
\end{lemma}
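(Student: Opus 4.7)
The plan is to reduce the lemma directly to the definition of $\mu(K,\Lambda;\Z^{n-1})$. Write $\mu$ for this quantity. By definition,
\[
\Z^{n-1} \subset \mu K + \Lambda,
\]
meaning that every integer point can be expressed as an element of $\mu K$ plus a lattice point of $\Lambda$. Apply this to the given $\ve y \in \Z^{n-1}$: there exist $\ve k \in K$ and $\ve \lambda \in \Lambda$ with $\ve y = \mu\,\ve k + \ve \lambda$.

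Rearranging gives $\ve y - \ve \lambda = \mu\,\ve k \in \mu K$. Since $\Lambda$ is a lattice (hence closed under negation), $-\ve \lambda \in \Lambda$, so $\ve y - \ve \lambda = \ve y + (-\ve \lambda)$ belongs to the translated lattice $\ve y + \Lambda$. Thus $\ve y - \ve \lambda$ is a point of $\mu K$ that lies in $\ve y + \Lambda$, which is what we need.

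There is essentially no obstacle here; the content is just rephrasing the covering condition. The only minor point worth noting in the write-up is that the $\min$ in the definition of $\mu(K,\Lambda;\Z^{n-1})$ is indeed attained (so that the inclusion $\Z^{n-1}\subset \mu K+\Lambda$ holds with equality for $\mu$ itself, not merely for $\mu+\varepsilon$), which follows from $K$ being closed and the covering property being a closed condition; this justifies using $\mu K$ rather than $(\mu+\varepsilon)K$ in the conclusion.
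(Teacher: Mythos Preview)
Your proof is correct and follows essentially the same approach as the paper: both apply the defining inclusion $\Z^{n-1}\subset \mu K+\Lambda$ to the point $\ve y$ and read off that $(\ve y+\Lambda)\cap \mu K\neq\emptyset$. You simply spell out the rearrangement step and the attainment of the minimum a bit more explicitly than the paper does.
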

\begin{proof}
By the definition of $\mu(K,\Lambda; \Z^{n-1})$ we have  $\Z^{n-1} \subset \mu(K,\Lambda; \Z^{n-1}) K + \Lambda$. Therefore
for any integer vector ${\ve y}$ we have
$({\ve y}+\Lambda)\cap \mu(K,\Lambda; \Z^{n-1}) K\neq \emptyset$.
\end{proof}

The next lemma gives an upper bound for the integer programming gap in terms of the Frobenius number associated with vector $\A$.
\begin{lemma}\label{lemma_bounds_for_distance} For $\A$ satisfying (\ref{nonzeroA}) and ${\ve c}\in \Q^n$
\be\label{bound_via_Frobi}
\gap_{\ve c}(\A)\le \frac{(\frob(\A)+\|\A\|_\infty)\|{\ve c}\|_1}{\min_i a_i}\,.
\ee
\end{lemma}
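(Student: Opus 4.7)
The plan is to reduce to a reduced-cost formulation of the knapsack, manufacture via Kannan's identity (\ref{integralKannan}) and Lemma \ref{lemma_lattice_covering} an integer feasible point whose non-basic coordinates are small, and then bound its objective value against the LP optimum. By relabelling the indices I may assume $c_n/a_n = \min_i c_i/a_i$, so that for every positive integer $b$ for which (\ref{initial_IP}) is feasible, the relaxation (\ref{initial_LP}) is optimised at the vertex $\ve x^* = (b/a_n)\ve e_n$ with $LP_{\ve c}(\A,b) = c_n b/a_n$. For any $\ve z \in \R^n$ with $\A\ve z = b$, eliminating $z_n = (b - \sum_{i<n} a_i z_i)/a_n$ yields
\begin{equation*}
\ve c \cdot \ve z \;=\; LP_{\ve c}(\A,b) \;+\; \sum_{i<n} l_i z_i, \qquad l_i := c_i - \frac{c_n a_i}{a_n},
\end{equation*}
and LP-optimality of index $n$ forces $l_i \ge 0$ for $i<n$. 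Thus it suffices to exhibit $\ve z \in \Z^n_{\ge 0}$ satisfying $\A\ve z = b$ and $\sum_{i<n} a_i z_i \le \frob(\A)+a_n$.

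I construct such a $\ve z$ in two cases. If $b \ge \frob(\A)+a_n$, then because $\gcd(a_1,\ldots,a_n)=1$ the residues $\sum_{i<n} a_i y_i \bmod a_n$ cover $\Z/a_n\Z$ as $\ve y$ ranges over $\Z^{n-1}$, so I may pick $\ve y$ with $\sum_{i<n} a_i y_i \equiv b \pmod{a_n}$. Kannan's identity (\ref{integralKannan}) gives $\mu(S_\A,\Lambda_\A;\Z^{n-1}) = \frob(\A)+a_n$, and Lemma \ref{lemma_lattice_covering} applied to the translate $\ve y + \Lambda_\A$ produces $\ve z' \in (\ve y + \Lambda_\A) \cap (\frob(\A)+a_n)\,S_\A$. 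By construction $\ve z' \in \Z^{n-1}_{\ge 0}$, $\sum_{i<n} a_i z'_i \le \frob(\A)+a_n \le b$ and $\sum_{i<n} a_i z'_i \equiv b \pmod{a_n}$, so $z_n := (b - \sum_{i<n} a_i z'_i)/a_n$ is a non-negative integer and $\ve z := (\ve z', z_n)$ is as required. If instead $b < \frob(\A)+a_n$, I simply take $\ve z$ to be any integer feasible solution (one exists by hypothesis); then $\sum_{i<n} a_i z_i \le b < \frob(\A)+a_n$ automatically.

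In either case each $z_i$ with $i<n$ is at most $(\frob(\A)+\|\A\|_\infty)/\min_j a_j$. Writing
\begin{equation*}
\sum_{i<n} l_i z_i \;=\; \sum_{i<n} c_i z_i \;-\; \frac{c_n}{a_n} \sum_{i<n} a_i z_i,
\end{equation*}
I bound the first sum by $(\|\ve c\|_1 - |c_n|)(\frob(\A)+\|\A\|_\infty)/\min_j a_j$ via the coordinate bound above and the identity $\sum_{i<n}|c_i| = \|\ve c\|_1 - |c_n|$. The second term is non-positive when $c_n \ge 0$, while when $c_n < 0$ it is at most $(|c_n|/a_n)(\frob(\A)+a_n) \le |c_n|(\frob(\A)+\|\A\|_\infty)/\min_j a_j$, using $a_n \ge \min_j a_j$ and $a_n \le \|\A\|_\infty$. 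Adding the two contributions yields $\sum_{i<n} l_i z_i \le \|\ve c\|_1(\frob(\A)+\|\A\|_\infty)/\min_j a_j$, and $IP_{\ve c}(\A,b) \le \ve c \cdot \ve z$ then gives the required estimate. The main subtlety I expect is obtaining exactly the constant $\|\ve c\|_1$ rather than $2\|\ve c\|_1$: a crude application of $|l_i| \le |c_i| + |c_n| a_i/a_n$ would double the $c$-dependence, so the sign-based split above, together with the elementary inequality $(\frob(\A)+a_n)/a_n \le (\frob(\A)+\|\A\|_\infty)/\min_j a_j$, is essential.
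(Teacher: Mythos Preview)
Your proof is correct and follows essentially the same route as the paper: the same case split on the size of $b$, the same use of Kannan's identity (\ref{integralKannan}) together with Lemma~\ref{lemma_lattice_covering} to produce a feasible integer point whose first $n-1$ coordinates lie in $(\frob(\A)+a_n)S_\A$, and the same observation that for small $b$ any feasible point already works.

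The only difference is in the final estimate. The paper simply observes that the constructed point $\ve z$ satisfies $\|\ve v - \ve z\|_\infty \le (\frob(\A)+a_n)/\min_i a_i$ (the last coordinate obeys $|v_n - z_n| = \bigl(\sum_{i<n} a_i z_i\bigr)/a_n \le (\frob(\A)+a_n)/a_n$, which is dominated by the same quantity), and then applies H\"older's inequality $\ve c\cdot(\ve z - \ve v) \le \|\ve c\|_1\,\|\ve v - \ve z\|_\infty$. This yields the factor $\|\ve c\|_1$ directly, without the sign-splitting you carry out; your concern about a possible loss of a factor~$2$ is therefore unfounded, and the reduced-cost computation, while correct, is more work than needed.
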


\begin{proof}

Let $b$ be a nonnegative integer. Consider the {\em knapsack polytope}
\bea
P(\A, b)=\{{\ve x}\in \R^n_{\ge 0}: \A {\ve x}=b\}\,.
\eea
Clearly, $P(\A, b)$ is a simplex with vertices
\bea
(b/a_1,0, \ldots, 0), (0, b/a_2, \ldots, 0), \ldots, (0, \ldots, 0, b/a_n)
\eea
and
\be\label{cube}
P(\A, b)\subset \left[0, \frac{b}{\min_i a_i}\right]^n\,.
\ee

Notice also that
\be\label{projection}
bS_\A= \pi_n(P(\A, b))\,,
\ee
where $\pi_n(\cdot): \R^n\rightarrow \R^{n-1}$ is the projection that forgets the last coordinate.

Rearranging the entries of $\A$, if necessary, we may assume that the optimal value $LP_{\ve c}(\A,b)$ is attained at the vertex
${\ve v}=(0,\ldots, 0, b/a_n)$ of $P(\A, b)$.

If $b \le \mu(S_\A,\Lambda_\A; \Z^{n-1})$ then (\ref{integralKannan}) and (\ref{cube}) imply that the integrality gap is bounded by the right hand side of (\ref{bound_via_Frobi}).

Suppose now that $b > \mu(S_\A,\Lambda_\A; \Z^{n-1})$. Then, in view of (\ref{projection}),
\be
\mu(S_\A,\Lambda_\A; \Z^{n-1}) S_\A\subset \pi_n(P(\A, b))\,.
\ee
Let $\Lambda(\A,b)=\{{\ve x}\in \Z^n: \A{\ve x}=b\}$ be the set of integer points in the affine hyperplane $\A{\ve x}=b$.
There exists ${\ve y}\in \Z^{n-1}$ such that
\be
\pi_n(\Lambda(\A,b))={\ve y}+\Lambda_\A\,.
\ee
By Lemma~\ref{lemma_lattice_covering},
there is a point $(z_1, \ldots, z_{n-1})\in \pi_n(\Lambda(\A,b))\cap \mu(S_\A,\Lambda_\A; \Z^{n-1}) S_\A$.
Hence
\be
{\ve z}=\left(z_1, \ldots, z_{n-1}, \frac{b}{a_n}-\frac{a_1z_1+\cdots+a_{n-1}z_{n-1}}{a_n}\right)\in \Lambda(\A,b)\cap P(\A, b)
\ee
is a feasible integer point for the knapsack problem (\ref{initial_IP}).

Since $(z_1, \ldots, z_{n-1})\in \mu(S_\A,\Lambda_\A; \Z^{n-1}) S_\A$, we have
\be
||{\ve v}-{\ve z}||_\infty \le \frac{\mu(S_\A,\Lambda_\A; \Z^{n-1})}{\min_i a_i}\le \frac{\frob(\A)+\|\A\|_\infty}{\min_i a_i}\,,
\ee
where the last inequality follows from (\ref{integralKannan}).
Therefore, the integrality gap is bounded by the right hand side of (\ref{bound_via_Frobi}).
\end{proof}

To complete the proof of part (i) we need the classical upper bound for the Frobenius number due to Schur (see Brauer \cite{Brauer}):
\be\label{Schur}
\frob(\A)\le (\min_i a_i)\|\A\|_{\infty}-(\min_i a_i)-\|\A\|_{\infty}\,.
\ee
Combining (\ref{bound_via_Frobi}) and  (\ref{Schur}) we obtain (\ref{thm_upper}).

To prove part (ii), we set $A=(k,\ldots,k,1)$, $b=k-1$ and $\ve{c}=\ve{e}_n$, where $\ve{e}_i$ denotes the $i$-th unit-vector.
Note that $A$ fulfils the conditions~~\eqref{nonzeroA}.
The integer programming problem~\eqref{initial_IP} has precisely one feasible, and therefore optimal, integer point, namely $(k-1)\cdot\ve{e}_n$. Thus $IP_{\ve{c}}(A,b)=k-1$.
The corresponding linear relaxation~\eqref{initial_LP} has the, in general not unique, optimal solution $\frac{k-1}{k}\cdot\ve{e}_1$ with $LP_{\ve{c}}(A,b)=0$.
Hence, $\gap_{\ve{c}}(A) \ge IG_{\ve{c}}(A,b) = k-1 = (\|\A\|_{\infty}-1) \|{\ve c}\|_1$.
\qed


\section{Proof of Theorem \ref{thm_optimal_bound}}

We will first establish a connection between $\gap_{\ve c}(\A)$ and the lattice programming gap
associated with a certain lattice program.

For a  vector ${\ve w}\in \Q^{n-1}_{>0}$, a $(n-1)$-dimensional lattice $\Lambda\subset\Z^{n-1}$ and ${\ve r}\in \Z^{n-1}$
consider the lattice program (also referred to as the {\em group problem})
\be\begin{split}
\min\{ {\ve w}\cdot {\ve x}: {\ve x} \equiv {\ve r} (\modulo \Lambda), {\ve x}\in \R^{n-1}_{\ge 0}\}\,.
\end{split}
\label{generic_group_relaxation}
\ee
Here ${\ve x} \equiv {\ve r} (\modulo \Lambda)$ if and only if ${\ve x} - {\ve r}$ is a point of $\Lambda$.

Let $m(\Lambda,{\ve w}, {\ve r})$ denote the value of the minimum in (\ref{generic_group_relaxation}).
The {\em lattice programming gap} $\gap(\Lambda, {\ve w})$ of (\ref{generic_group_relaxation}) is defined as
\be\begin{split}
\gap(\Lambda,{\ve w})=\max_{{\ve r}\in \Z^{n-1}}m(\Lambda,{\ve w}, {\ve r})\,.
\end{split}
\label{maximum_gap}
\ee
The lattice programming gaps were introduced and studied for sublattices of all dimensions in $\Z^{n-1}$ by Ho\c{s}ten and Sturmfels \cite{HS}.

To proceed with the proof of the part (i), we assume without loss of generality that $\tau(\A, {\ve c})=\{n\}$.
Then for ${\ve l}={\ve l}({\A},{\ve c})$ the lattice programs
\be\label{technical}\begin{split}
\min\{ {\ve l}\cdot {\ve x}: {\ve x} \equiv {\ve r}\; (\modulo \Lambda_{\A}), {\ve x}\in \R^{n-1}_{\ge 0}\}\,,\; {\ve r}\in \Z^{n-1}
\end{split}
\ee
are the {\em group relaxations} to (\ref{initial_IP}).

Indeed, for any positive $b\in \Z$ and any integer solution ${\ve z}$ of the equation $A{\ve x}=b$ the lattice program (\ref{technical})
with ${\ve r}=\pi_n({\ve z})$,
is a group relaxation to (\ref{initial_IP}).
On the other hand, for any integer vector ${\ve r}$ the lattice program (\ref{technical})
is a group relaxation to (\ref{initial_IP}) with
$b=\pi_n(A){\ve u}$
for a nonnegative integer vector ${\ve u}$ from ${\ve r}+\Lambda_{\A}$.

In both cases
\bea
IG_{\ve c}(\A, b)\ge m(\Lambda_{\A},{\ve l}, {\ve r})
\eea
and, consequently,
\be\label{IPGbiggerLPG}
\gap_{\ve c}(\A)\ge \gap(\Lambda_A, {\ve l})\,.
\ee
Note that for $n=2$ we have $\gap(\Lambda_A, {\ve l})=l_1(|\A_{\tau}|-1)$ and thus (\ref{IPGbiggerLPG}) implies (\ref{optimal_bound}).
For $n>2$, the bound (\ref{optimal_bound}) immediately follows from (\ref{IPGbiggerLPG}) and Theorem 1.2(i) in \cite{lpg}.

The proof of the part (ii) will be based on the following lemma.

\begin{lemma}\label{Lpg_link} Let $\A$ satisfy (\ref{nonzeroA}),   ${\ve c}=(a_1, \ldots, a_{n-1}, 0)^t\in \Q^n$ and ${\ve l}=(a_1, \ldots, a_{n-1})^t\in \Q^{n-1}_{>0}$. Then
\be\label{Lpg_via_gap}
\gap_{\ve c}(\A)= \gap(\Lambda_A, {\ve l})\,.
\ee
\end{lemma}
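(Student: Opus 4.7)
The plan is to verify that this particular choice of ${\ve c}$ makes the LP optimum completely explicit, and then to show that both sides of \eqref{Lpg_via_gap} reduce to the same combinatorial problem: finding the minimum nonnegative-integer representation by $a_1,\ldots,a_{n-1}$ inside a fixed residue class modulo $a_n$.

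First I would observe that for any positive integer $b$ and any ${\ve x}$ in the LP feasible set $\{{\ve x}\ge {\ve 0}:A{\ve x}=b\}$,
$$
{\ve c}\cdot{\ve x}=\sum_{i=1}^{n-1}a_i x_i = b-a_n x_n,
$$
so minimizing ${\ve c}\cdot{\ve x}$ is equivalent to maximizing $x_n$. Since $a_i>0$ for $i<n$, this forces $x_i=0$ for $i<n$, the unique LP optimizer is $(0,\ldots,0,b/a_n)^t$, and $LP_{\ve c}(A,b)=0$. Therefore $(A,{\ve c})$ is generic with $\tau(A,{\ve c})=\{n\}$, and a direct calculation gives ${\ve l}(A,{\ve c})={\ve c}_{\bar\tau}-{\ve c}_\tau A_\tau^{-1}A_{\bar\tau}=(a_1,\ldots,a_{n-1})^t$, matching the ${\ve l}$ in the statement. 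The inequality $\gap_{\ve c}(A)\ge\gap(\Lambda_A,{\ve l})$ is then already contained in \eqref{IPGbiggerLPG}.

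For the reverse inequality I would translate $IG_{\ve c}(A,b)=IP_{\ve c}(A,b)$ into a pure representation problem modulo $a_n$. Substituting $y=b-a_n x_n=\sum_{i<n}a_i x_i$, the integer program asks for the smallest $y\ge 0$ that is a nonnegative integer combination of $a_1,\ldots,a_{n-1}$ and satisfies $y\equiv b\pmod{a_n}$ together with $y\le b$. Whenever the IP is feasible, the bound $y\le b$ is automatically met by the minimizer, so $IG_{\ve c}(A,b)$ equals the smallest nonnegative integer combination of $a_1,\ldots,a_{n-1}$ lying in the residue class $b\bmod a_n$. Unpacking \eqref{technical} in the same way, $m(\Lambda_A,{\ve l},{\ve r})$ is precisely the smallest such combination lying in the residue class $\sum_{i<n}a_i r_i\bmod a_n$. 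Because $\gcd(a_1,\ldots,a_n)=1$, the map ${\ve r}\mapsto\sum_{i<n}a_i r_i\bmod a_n$ is surjective onto $\Z/a_n\Z$, so for each feasible $b$ one can choose ${\ve r}_b$ with $IG_{\ve c}(A,b)=m(\Lambda_A,{\ve l},{\ve r}_b)\le \gap(\Lambda_A,{\ve l})$. Maximizing over feasible $b$ gives $\gap_{\ve c}(A)\le \gap(\Lambda_A,{\ve l})$ and hence \eqref{Lpg_via_gap}.

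The only delicate step is the identification $IG_{\ve c}(A,b)=m(\Lambda_A,{\ve l},{\ve r}_b)$, which relies on the constraint $y\le b$ being redundant whenever the IP is feasible. This is transparent: if the minimum representable integer in the relevant residue class exceeded $b$, then $A{\ve x}=b$ would admit no nonnegative integer solution and $b$ would contribute nothing to $\gap_{\ve c}(A)$. I therefore expect no genuine obstacle; the content of the lemma is the observation that this particular weighting promotes the general domination \eqref{IPGbiggerLPG} to an actual equality.
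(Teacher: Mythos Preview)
Your argument is correct and follows essentially the same route as the paper: you compute $LP_{\ve c}(A,b)=0$, rewrite $IP_{\ve c}(A,b)$ as the minimum nonnegative integer combination of $a_1,\ldots,a_{n-1}$ in the residue class $b\bmod a_n$, and observe that the extra constraint $y\le b$ is redundant whenever the IP is feasible---exactly the step the paper phrases as dropping the condition ${\ve x}\in\pi_n(P(A,b))$ from \eqref{Extra_constraint}. The only cosmetic difference is that you obtain the inequality $\gap_{\ve c}(A)\ge\gap(\Lambda_A,{\ve l})$ by first checking genericity and invoking \eqref{IPGbiggerLPG}, whereas the paper re-derives both inequalities inside the lemma; the content is identical.
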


\begin{proof}
Observe that assumption (i) in (\ref{nonzeroA}) implies that the linear programming relaxation (\ref{initial_LP}) is feasible if and only if $b$ is nonnegative.
Recall that $\Lambda(\A,b)=\{{\ve x}\in \Z^n: \A{\ve x}=b\}$ denotes the set of integer points in the affine hyperplane $\A{\ve x}=b$ and $P(\A,b)=\{{\ve x}\in\R_{\ge 0}: \A{\ve x}=b\}$ denotes the knapsack polytope.
Suppose that for a nonnegative $b$ the knapsack problem (\ref{initial_IP}) is feasible, with solution ${\ve y}\in \Z^n_{\ge 0}$. Then  for ${\ve r}=\pi_n({\ve y})\in \Z^{n-1}_{\ge 0}$
\bea
\pi_n(\Lambda(\A,b))={\ve r}+\Lambda_\A\,.
\eea
As $c_n=0$, the optimal value of the linear programming relaxation $LP_{{\ve c}}(A, b)=0$.
Therefore, noting that ${\ve c}=(a_1, \ldots, a_{n-1}, 0)^t$ and ${\ve l}=\pi_n({\ve c})$,
\be\label{Extra_constraint}
IG_{\ve c}(A, b)=\min\{{\ve l}\cdot {\ve x}: {\ve x}\in {\ve r}+\Lambda_\A\,, {\ve x}\in \pi_n(P({\A}, b)) \}\,.
\ee
Since
\bea
\pi_n(P({\A}, b))=bS_{\A}=\{{\ve x}\in \R^{n-1}_{\ge 0}: {\ve l}\cdot {\ve x}\le b\}\,
\eea
and ${\ve l}\cdot {\ve r}\le A{\ve y}=b$, the constraint ${\ve x}\in \pi_n(P({\A}, b))$ in (\ref{Extra_constraint}) can be removed. Consequently, we have
\bea 
IG_{\ve c}(A, b)= m(\Lambda_A,{\ve l}, {\ve r})\,.
\eea
Hence, by (\ref{maximum_gap}), we obtain
\be\label{gap_less}
\gap_{\ve c}(\A)\le  \gap(\Lambda_A, {\ve l})\,.
\ee
Suppose now that $\gap(\Lambda_A, {\ve l})= m(\Lambda_A,{\ve l}, {\ve r}_0)$. Then
\bea
IG_{\ve c}(\A, \A{\ve r}_0)= m(\Lambda,{\ve l}, {\ve r}_0)\,.
\eea
 Together with (\ref{gap_less}), this implies (\ref{Lpg_via_gap}).
\end{proof}

As was shown in the proof of Theorem 1.1 in \cite{lpg}, for ${\ve l}=(a_1, \ldots, a_{n-1})^t$
\bea
\gap(\Lambda_A, {\ve l})=\frob(\A)+a_n\,.
\eea
Thus we obtain the following corollary.

\begin{cor}\label{Frobi_link} Let $\A=(a_1, \ldots, a_n)$ satisfy (\ref{nonzeroA}) and ${\ve c}=(a_1, \ldots, a_{n-1}, 0)^t$. Then
\be\label{Frobi_via_gap}
\gap_{\ve c}(\A)= \frob(\A)+a_n\,.
\ee
\end{cor}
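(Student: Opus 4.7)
The plan is to obtain Corollary~\ref{Frobi_link} as an immediate consequence of the two results that appear in the lines preceding its statement. First, the cost vector ${\ve c}=(a_1,\ldots,a_{n-1},0)^t$ in the corollary has precisely the shape required by Lemma~\ref{Lpg_link}, and the associated vector ${\ve l}=(a_1,\ldots,a_{n-1})^t$ lies in $\Q^{n-1}_{>0}$ thanks to condition~(i) of~(\ref{nonzeroA}). Hence Lemma~\ref{Lpg_link} applies directly and delivers the identity $\gap_{\ve c}(\A)=\gap(\Lambda_\A,{\ve l})$, translating the integer programming gap into the language of lattice programming gaps.

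The second ingredient is the equality $\gap(\Lambda_\A,{\ve l})=\frob(\A)+a_n$ for this specific ${\ve l}$, recorded in the paragraph immediately above the corollary and extracted from the proof of Theorem~1.1 in \cite{lpg}. Composing the two equalities gives $\gap_{\ve c}(\A)=\frob(\A)+a_n$, which is the desired conclusion.

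There is essentially no obstacle in this argument: all of the substantive work has already been performed in Lemma~\ref{Lpg_link}, which identifies the integer programming gap with a lattice programming gap in the case where $c_n=0$, and in the cited identity from \cite{lpg}, which in turn rests on Kannan's relation between covering radii of simplices and Frobenius numbers recalled in Section~\ref{geometry}. The only verification required is that the hypotheses of Lemma~\ref{Lpg_link} are literally satisfied by the data of the corollary, which is immediate from the statement.
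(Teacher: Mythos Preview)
Your proposal is correct and follows exactly the paper's approach: apply Lemma~\ref{Lpg_link} to obtain $\gap_{\ve c}(\A)=\gap(\Lambda_\A,{\ve l})$, and then invoke the identity $\gap(\Lambda_\A,{\ve l})=\frob(\A)+a_n$ from \cite{lpg} recorded immediately before the corollary.
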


For $n=2$, we have
\be\label{Sylvester}
g(A)=a_1a_2-a_1-a_2
\ee
 by a classical result of Sylvester (see e.g. \cite{Alf}). Hence
the part (ii) immediately follows from Corollary \ref{Frobi_link}.
For $n>2$, noting that $|\A_{\tau}|=a_n$, 
the part (ii) follows from  Corollary \ref{Frobi_link} and Theorem 1.1 (ii) in \cite{AG}.

\section{Proof of Theorem \ref{Ratio}}

For convenience, we will work with the quantity
\bea
\ffrob(\A)= \frob(\A)+a_1 +\cdots+ a_n
\eea
and the set
\bea
\RR= \{A\in \Z^{1\times n}: 0<a_1\le\cdots \le a_n\} \,.
\eea
%
%
By Lemma \ref{lemma_bounds_for_distance}, we have
%
%
\be\label{basic_bound}
\begin{split}
 N_{\epsilon}(t,T)\le n!\, \#\left\{\A\in {\NC}(T)\cap \RR: \frac{\ffrob(\A)}{a_1 a_n^{\epsilon}}>t \right\} \,.
\end{split}
\ee

We may assume $t \ge 10$ since otherwise (\ref{main_bound})
follows from $N_{\epsilon}(t,T)/N(T)\le 1$.
%
We keep $t' \in [1, t]$, to be fixed later. Then, setting $s(\A)= a_{n-1} a_n^{1/(n-1)}$ and noting (\ref{basic_bound}), we get
\be\label{two_terms}
\begin{split}
 N_{\epsilon}(t,T)\le n!\, \#\left\{\A\in {\NC}(T)\cap \RR: \frac{\ffrob(\A)}{s(\A)}>t'  \mbox{ or } \frac{s(\A)}{a_1a_n^{\epsilon}} > \frac{t}{t'}\right\}
\\
\le n!\,\#\left\{\A\in {\NC}(T)\cap \RR: \frac{\ffrob(\A)}{s(\A)}>t'\right\} \\ + \,n!\,\#\left\{\A\in {\NC}(T)\cap \RR: \frac{a_{n-1}}{a_1 a_n^{\epsilon-1/(n-1)}} > \frac{t}{t'}\right\}\,.
\end{split}
\ee

The first of the last two terms in (\ref{two_terms}) can be estimated using a special case of Theorem 3 in Str\"ombergsson \cite{Str}.

\begin{lemma}\label{first_term_bound}

\be\label{Str_bound}
\#\left\{\A\in {\NC}(T)\cap\RR: \frac{\ffrob(\A)}{s(\A)}>r\right\}\ll_n \frac{1}{r^{n-1}} N(T) \,.
\ee
\end{lemma}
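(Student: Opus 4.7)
The plan is to derive Lemma~\ref{first_term_bound} directly from Str\"ombergsson's distribution theorem for Frobenius numbers (\cite[Theorem~3]{Str}). That theorem provides, uniformly in $T \ge 1$ and $r > 0$, a tail estimate for the count of ordered tuples $\A \in \NC(T) \cap \RR$ for which the normalised quantity $\ffrob(\A)/s(\A)$ exceeds $r$, with the tail decaying at rate $r^{-(n-1)}$. Because the normalising factor $s(\A) = a_{n-1}a_n^{1/(n-1)}$ that we use is precisely the scale that appears in Str\"ombergsson's formulation (it is the natural scale arising from the limiting distribution of the shapes of the Kannan lattices $\Lambda_{\A}$ inside the space of unimodular $(n-1)$-dimensional lattices), the lemma can essentially be read off directly.

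More concretely, the steps I would carry out are: first, invoke Kannan's identity $\mu(S_{\A},\Lambda_{\A}) = \ffrob(\A)$ from Section~\ref{geometry} to identify $\ffrob(\A)$ with the covering radius of $S_{\A}$ with respect to $\Lambda_{\A}$, which is the geometric quantity whose distribution is studied in \cite{Str}. Next, match the scaling $s(\A) = a_{n-1}a_n^{1/(n-1)}$ with Str\"ombergsson's normalisation, and verify that his bound is uniform in $T$ rather than asymptotic in $T\to\infty$; the latter is essential because \eqref{Str_bound} must hold for every fixed $T\ge 1$. Finally, absorb all dimension-dependent constants (such as a possible factor $n!$ from passing between ordered and unordered tuples, and any correction coming from the coprimality condition in~\eqref{nonzeroA}, both of which depend only on $n$) into the $\ll_n$-symbol. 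Multiplying through by $N(T)$ to pass from proportion to count then gives exactly \eqref{Str_bound}.

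I do not anticipate a substantive obstacle; the proof is almost entirely a matter of translating notation. The only point requiring genuine care is confirming that Str\"ombergsson's estimate is uniform in $T$ and not merely a statement about the limiting measure, but this uniformity is built into~\cite[Theorem~3]{Str}, so the citation transfers cleanly to our setting.
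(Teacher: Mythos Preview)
Your proposal is correct and follows exactly the paper's approach: the paper's proof is a single line stating that \eqref{Str_bound} follows immediately from Theorem~3 in \cite{Str} applied with $\mathcal{D}=[0,1]^{n-1}$. Your discussion of matching the normalisation $s(\A)$, checking uniformity in $T$, and absorbing dimension-dependent constants into $\ll_n$ is precisely the translation work that the paper leaves implicit in its citation.
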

\begin{proof}
The inequality (\ref{Str_bound}) immediately follows from Theorem 3 in \cite{Str} applied with
${\mathcal D}=[0,1]^{n-1}$.
\end{proof}
To estimate the last term, we will need the following lemma.

\begin{lemma}\label{second_term_bound}

\be\label{second_term_ineq}
\#\left\{\A\in {\NC}(T)\cap \RR: \frac{a_{n-1}}{a_1 a_n^{\epsilon-1/(n-1)}} > r\right\}\ll_n \frac{1}{rT^{\epsilon-1/(n-1)}}N(T) \,.
\ee
\end{lemma}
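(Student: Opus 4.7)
The plan is a short elementary counting argument. Set $\beta := \epsilon - 1/(n-1)$, and note $\beta < 1$ since $\epsilon < 1$. I would condition on the largest coordinate $a_n \in \{1, \dots, T\}$ and, for each such $a_n$, bound the number of admissible $(a_1, \dots, a_{n-1})$ separately.

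The one quantitative input from the hypothesis of the lemma is the following. For $A \in \RR$ the ordering $a_{n-1} \le a_n$, combined with $a_{n-1} > r\, a_1\, a_n^{\beta}$, forces
$$a_1 < a_n^{1-\beta}/r.$$
Once $a_1$ and $a_n$ are fixed, the intermediate entries $(a_2, \dots, a_{n-1})$ form a nondecreasing sequence in $[a_1, a_n] \cap \Z$, and so there are at most $\binom{a_n - a_1 + n - 2}{n-2} \ll_n a_n^{n-2}$ such sequences. Dropping the coprimality condition can only inflate the count, hence
$$\#\{\cdots\} \;\le\; \sum_{a_n=1}^{T} a_n^{n-2}\cdot\frac{a_n^{1-\beta}}{r} \;=\; \frac{1}{r}\sum_{a_n=1}^{T} a_n^{n-1-\beta} \;\ll_n\; \frac{T^{n-\beta}}{r}.$$

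To convert this to the stated form I would invoke the standard M\"obius identity $N(T) = \sum_{d \ge 1}\mu(d)\lfloor T/d\rfloor^n \asymp_n T^n$, which follows from the coprimality condition in (\ref{nonzeroA}); this yields $T^{n-\beta}/r \ll_n N(T)/(r\, T^{\beta})$, exactly (\ref{second_term_ineq}). There is no substantial obstacle in the argument. The only point requiring slight care is that when $\beta < 0$ the estimate $a_1 < a_n^{1-\beta}/r$ may be weaker than the trivial $a_1 \le a_n$; nevertheless it is still a valid upper bound on the number of admissible $a_1$, and the resulting sum $\sum_{a_n \le T} a_n^{n-1-\beta}$ remains of order $T^{n-\beta}$ since $n - 1 - \beta > -1$ for all $n \ge 2$ and $\epsilon \in (0,1)$.
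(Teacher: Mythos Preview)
Your argument is correct and is essentially the paper's own proof. Both drop the coprimality condition, use $a_{n-1}\le a_n$ to deduce $a_1 < a_n^{1-\beta}/r$ with $\beta=\epsilon-1/(n-1)$, and then count crudely to reach $T^{n-\beta}/r$ before invoking $N(T)\asymp_n T^n$; the only cosmetic difference is that the paper replaces $a_n$ by $T$ at once and counts a box, whereas you sum over $a_n$.
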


\begin{proof}

Since $A\in \RR$, we have $a_{n-1}\le a_n$. Hence
\bea
\#\left\{\A\in {\NC}(T)\cap \RR: \frac{a_{n-1}}{a_1 a_n^{\epsilon-1/(n-1)}} > r\right\}
\le  \#\left\{\A\in {\NC}(T)\cap \RR: a_n^{1+1/(n-1)-\epsilon}> r a_1\right\}\,.
\eea
Furthermore, all $\A\in {\NC}(T)\cap \RR$ with $a_n^{1+1/(n-1)-\epsilon}> r a_1$ are in the set
\bea
U=\{\A\in \Z^{1\times n}: 0< a_1< T^{1+1/(n-1)-\epsilon}/r, 0< a_i\le T, i=2, \ldots, n\}\,.
\eea
Since $\#(U\cap\Z^n)< T^{n+1/(n-1)-\epsilon}/r$ and $N(T)\asymp_n T^n$ (see e.g. Theorem 1 in \cite{SchmidtDuke}), the result follows.
\qed


%


Then by (\ref{two_terms}), (\ref{Str_bound}) and (\ref{second_term_ineq})

\be\label{two_terms2}
 \frac{N_{\epsilon}(t,T)}{N(T)}
\ll_n \frac{1}{(t')^{n-1}} + \frac{t'}{tT^{\epsilon-1/(n-1)}}\,.
\ee

Next, we will bound $T$ from below in terms of $t$, similar to Theorem 3 in \cite{Str}.  The upper bound of Schur (\ref{Schur})  implies
$\ffrob(\A)< n a_1 a_n$.
Thus, using (\ref{basic_bound}),
\bea
\begin{split}
 N_{\epsilon}(t,T)\le  \#\left\{\A\in {\NC}(T)\cap \RR: \frac{\ffrob(\A)}{a_1 a_n^{\epsilon}}>t \right\} \\ \le \#\left\{\A\in {\NC}(T)\cap \RR: a_n^{1-\epsilon}>\frac{t}{n} \right\}\,.
\end{split}
\eea

The latter set is empty if $T \le (t/n)^{\frac{1}{1-\epsilon}}$. Hence we may assume
\be\label{Tviat}
T > \left(\frac{t}{n}\right)^{\frac{1}{1-\epsilon}}\,.
\ee

Using (\ref{two_terms2}) and (\ref{Tviat}), we have
\be\label{viaalpha}
\frac{N_{\epsilon}(t,T)}{N(T)}\ll_n  \frac{1}{(t')^{n-1}} + \frac{t'}{  t^{ 1+\frac{ 1 }{ 1-\epsilon } \left(\epsilon -\frac{1}{n-1}\right) }  }\,.
\ee

To minimise the exponent of the right hand side of (\ref{viaalpha}), set  $t'=t^\beta$ and choose $\beta$ with
\be\label{equal_powers}
\beta (n-1)= 1+\frac{ 1 }{ 1-\epsilon } \left(\epsilon -\frac{1}{n-1}\right)-\beta\,.
\ee
We get
\bea
\beta= \frac{n -2}{n(n-1)(1-\epsilon)}
\eea
and, by (\ref{viaalpha}) and (\ref{equal_powers}),
\bea
 \frac{N_{\epsilon}(t,T)}{N(T)} \ll_n t^{-\alpha(\epsilon,n)}\,
\eea
with $\alpha(\epsilon,n)=\beta (n-1)$. The theorem is proved.
\end{proof}

\section{Proof of Corollary \ref{average}}
For the upper boudn we observe, that
 the conditions $n\ge 3$ and $\epsilon>2/n$ imply that in (\ref{main_bound}) $\alpha(\epsilon, n)>1$.
Consider vectors $\A\in \NC(T)$ with
\be\label{exponents}
e^{s-1}\le \max_{{\ve c}\in \Q^n}\frac{\gap_{\ve c}(\A)}{\|\A\|_{\infty}^{\epsilon}\|{\ve c}\|_1}<e^s\,.
\ee
The contribution of vectors satisfying (\ref{exponents}) to the sum
\bea
\sum_{\A\in \NC(T)}\max_{{\ve c}\in \Q^n}\frac{\gap_{\ve c}(\A)}{\|\A\|_{\infty}^{\epsilon}\|{\ve c}\|_1}
\eea
on the left hand side of (\ref{average_ineq}) is
\bea
\le N_{\epsilon}(e^{s-1}, T)e^{s}\ll_n e^{-\alpha(\epsilon, n) s}e^s N(T)\,,
\eea
where the last inequality holds by (\ref{main_bound}).
Therefore
\bea
\frac{1}{N(T)}\sum_{\A\in \NC(T)}\max_{{\ve c}\in \Q^n}\frac{\gap_{\ve c}(\A)}{\|\A\|_{\infty}^{\epsilon}\|{\ve c}\|_1}\ll_n \sum_{s=1}^\infty e^{s(1-\alpha(\epsilon, n))}\,.
\eea
Finally, observe that the series
\bea
\sum_{s=1}^\infty e^{s(1-\alpha(\epsilon, n))}
\eea
is convergent for $\alpha(\epsilon, n)>1$.



\newpage

\section{Appendix: Proof of Theorem \ref{lower_upper}}

We will
denote for $\ve A\in Q(T)$ the index of a maximum coordinate by
$i(A)$ and we set ${\ve c}_{A}=-\ve e_{i(A)}$.  
The tuples $(\ve A,\ve c_A )$ are generic and in view of Theorem
\ref{optimal_bound} we find
\begin{equation*}
\begin{split}
\gap_{c_A}(\ve A)  & \geq \rho_{n-1} a_{i(A)}^{1/(n-1)}\left(\prod_{i=1, i\ne
  i(A)}^{n}\frac{a_i}{a_{i(A)}}\right)^{1/(n-1)} - \sum_{i=1, i\ne i(A)}^n
\frac{a_i}{a_{i(A)}}  \\
& \geq  \frac{1}{\|\ve A\|_\infty}\rho_{n-1} \left(\prod_{i=1}^{n} a_i\right)^{1/(n-1)}-n.
\end{split}
\end{equation*}
Hence
\begin{equation*}
\frac{\gap_{c_A}(\ve A)}{\|\ve A\|_\infty^{1/(n-1)}} \geq
\rho_{n-1} \frac{1}{\|\ve A\|_\infty^{1+1/(n-1)}}
\left(\prod_{i=1}^{n} a_i\right)^{1/(n-1)}-\frac{n}{\|\ve A\|_\infty^{1/(n-1)}}.
\end{equation*}
Next we observe that
\begin{equation*}
\begin{split}
\sum_{\ve A\in Q(T)} \frac{1}{\|\ve A\|_\infty^{1/(n-1)}}& \leq
n\,T^{n-1}\sum_{t=1}^T  \frac{1}{t^{1/(n-1)}} \leq
n\,T^{n-1}\left(1+\int_{1}^T  \frac{1}{t^{1/(n-1)}}{\rm d}t\right)\\
&\leq \frac{n-1}{n-2}n\,T^{n-1} T^{1-1/(n-1)} \leq 2\,n\, T^{n-1/(n-1)}
\end{split}
\end{equation*}
for $n\geq 3$.
Thus, so far we know that
\begin{equation*}
\begin{split}
\frac{1}{N(T)}\sum_{\ve A\in Q(T)} & \max_{c\in\Q^n}\frac{\gap_{c}(\ve A)}{\|\ve
  A\|_\infty^{1/(n-1)}\|\ve c\|_1}  \geq \frac{1}{N(T)}
\sum_{\ve A\in Q(T)}
\frac{\gap_{c_A}(\ve A)}{\|\ve A\|_\infty^{1/(n-1)}}\\
&\geq
\rho_{n-1}\frac{1}{N(T)} \sum_{\ve A\in Q(T)} 
\left(\prod_{i=1}^{n} \frac{a_i}{\|\ve A\|_\infty}\right)^{1/(n-1)}- \frac{6n^2}{T^{1/(n-1)}},
\end{split}
\end{equation*}
since $N(T)\geq (1/3)T^n$, say. Instead of summing over all $Q(T)$ in
the first summand we just consider the subset
\begin{equation*}
{\overline Q}(T)=\left\{\ve A\in Q(T) :
a_i\geq \frac{T}{2},\, 1\leq i\leq n\right \}
\end{equation*}
for which we know  $a_i/\|\ve A\|_\infty\geq 1/2$. In order to
estimate (very roughly) the cardinality  of  ${\overline Q}(T)$ we
start with $n=2$ and we denote this $2$-dimensional  set by ${\overline
  Q}_2(T)$. There are at most
\begin{equation*}
\left(\left\lfloor \frac{T}{m}\right\rfloor
-\left\lceil\frac{T}{2m}\right\rceil +1 \right)^2 \leq \left(\frac{T}{2m}+1\right)^2
\end{equation*}
tuples  $(a,b)\in [0,T]^2$ with $\gcd(a,b)=m$ and $a,b\geq T/2$. Thus
 \begin{equation*}
 \begin{split}
 \#{\overline Q}_2(T) & \geq \left(\frac{T}{2}\right)^2 -
 \sum_{m=2}^{T/2}\left(\frac{T}{2m}+1\right)^2 \\
 & \geq \frac{T^2}{4}\left(1-\sum_{m=2}^\infty \frac{1}{m^2}\right) -
 T \sum_{m=2}^{T/2}\frac{1}{m} -\frac{T}{2} \\
& \geq  \frac{T^2}{4}\left(2-\frac{\pi^2}{6}\right) - T
\sum_{m=1}^{T/2}\frac{1}{m} \\
& \geq \frac{T^2}{12} - T\, (1+\ln(T/2))  \geq    \frac{T^2}{12} -2\, T\ln(T),
 \end{split}
 \end{equation*}
for $T\geq 2$. Since $\# {\overline Q}(T)\geq  \#{\overline
  Q}_2(T)\times (T/2)^{n-2}$ we get
\begin{equation*}
\begin{split}
\frac{1}{N(T)} &\sum_{\ve A\in Q(T)} 
\left(\prod_{i=1}^{n} \frac{a_i}{\|\ve A\|_\infty}\right)^{1/(n-1)}
\geq \frac{1}{T^n} \sum_{\ve A\in \overline{Q}(T)}
\left(\prod_{i=1}^{n} \frac{a_i}{\|\ve A\|_\infty}\right)^{1/(n-1)} \\
&\geq \frac{\# {\overline Q}(T)}{T^n}\left(\frac{1}{2}\right)^{n/(n-1)} \geq
\left(\frac{1}{2}\right)^{n} \left(\frac{1}{12}-2\frac{\ln T
  }{T}\right)\\ &  \geq \left(\frac{1}{2}\right)^{n}\frac{1}{24},
\end{split}
\end{equation*}
for $T\geq 500$, say. Hence, all together we have found for $T\geq
500$
\begin{equation*}
\begin{split}
  \frac{1}{N(T)}\sum_{\ve A\in Q(T)} & \max_{c\in\Q^n}\frac{\gap_{c}(\ve A)}{\|\ve
  A\|_\infty^{1/(n-1)}\|\ve c\|_1}  \geq
\rho_{n-1}\left(\frac{1}{2}\right)^{n}\frac{1}{24} - 6n^2
\frac{1}{T^{1/(n-1)}} \\
&\geq
\frac{6}{500}n\left(\left(\frac{1}{2}\right)^{n}-\frac{n}{T^{1/(n-1)}}
\right)  \geq \frac{3}{500}n\left(\frac{1}{2}\right)^{n},
\end{split}
\end{equation*}
for $T\geq \max\{500, (n\, 2^{n+1})^{n-1}\}$.

\enddocument